\newtheorem{thm}{Theorem}[]
\newtheorem*{thm*}{Theorem}
\newtheorem{cor}[thm]{Corollary}
\newtheorem{ex}[thm]{Example}
\newtheorem{rem}[thm]{Remark}
\newtheorem{defn}[thm]{Definition}
\newcommand{\param}{{\mathchoice{\mkern1mu\mbox{\raise2.2pt\hbox{$
\centerdot$}}
\mkern1mu}{\mkern1mu\mbox{\raise2.2pt\hbox{$\centerdot$}}\mkern1mu}{
\mkern1.5mu\centerdot\mkern1.5mu}{\mkern1.5mu\centerdot\mkern1.5mu}}}
\renewcommand{\setminus}{{\smallsetminus}}
\renewcommand \color [2][]{}
\begin{document}

\title       {Critical levels and Jacobi fields in a complex of cycles}
\author   {Ingrid Irmer}
\address {Department of Mathematics\\
             Florida State University\\
             208 Love Building\\
1017 Academic Way\\
Tallahassee, FL 32306-4510}
\email      {iirmer@math.fsu.edu}
\maketitle

\begin{abstract}
In this paper it is shown that the space of tight geodesic segments connecting any two vertices in a complex of cycles has finite, uniformly bounded dimension. The dimension is defined in terms of a discrete analogue of Jacobi fields, which are explicitly constructed and shown to give a complete description of the entire space of tight geodesics. Jacobi fields measure the extent to which geodesic stability breaks down. Unlike most finiteness properties of curve complexes, the arguments presented here do not rely on hyperbolicity, but rather on structures similar to Morse theory.
\end{abstract}

\section{Introduction}
Suppose $S$ is a closed, oriented, connected surface of genus at least two. The complex of cycles, $\mathcal{C}(S,\alpha)$ is a variant of Harvey's complex of curves, where vertices represent multicurves in the primitive homology class $\alpha$. A detailed definition is given in Section \ref{background}.\\

In Riemannian geometry, the dimension of the space of geodesic segments connecting any two points can be defined using the space of Jacobi fields. In Section \ref{Jacobi} the ``Jacobi fields'' are defined and explicitly constructed, and the dimension of the space of geodesic segments is defined in Section \ref{proof}.\\

Curve complexes are in general locally infinite, so there can be infinitely many geodesic arcs connecting two vertices. In order to be able to prove theorems in a locally infinite complex, the concept of tightness was introduced in \cite{MasurandMinskyII} and modified in \cite{Bowditch3}. Subsection \ref{background} defines tightness for $\mathcal{C}(S,\alpha)$. It is a classic result from \cite{MasurandMinskyII} and \cite{Bowditch3} that there are only finitely many tight geodesics connecting any two verticies $m_1$ and $m_2$ in the complex of curves $\mathcal{C}(S)$. In $\mathcal{C}(S,\alpha)$, it also follows from the main theorem of this paper that there are finitely many tight geodesics connecting any two vertices; however, unlike in $\mathcal{C}(S)$, this is not a consequence of hyperbolicity, and geodesics do not fellow travel in $\mathcal{C}(S, \alpha)$, as demonstrated in Figure 13 of \cite{Me2}. \\

In \cite{Me2} an algorithm for constructing geodesics was given, which will be outlined briefly in Section \ref{background} for completeness. This paper develops the idea that the quantity called the ``overlap function'' used in this algorithm for constructing geodesics has strong parallels with a Morse function. Critical levels defined in Section \ref{proof} are analogues of conjugate points along geodesics in Riemannian geometry. The bounded topology of $S$ gives a uniform bound on the number of critical levels, from which the theorem follows:\\

\begin{thm}
\label{maintheorem}
Given any two vertices, $m_1$ and $m_2$ in $\mathcal{C}(S,\alpha)$, the space of tight geodesics connecting $m_1$ and $m_2$ has dimension less than $36\chi(S)^{2}$. 
\end{thm}

%Connections with Thurston Norm??????????????.\\

The \textit{Torelli group} $\mathcal{T}$ of $S$ is the subgroup of the mapping class group of $S$ that acts trivially on $H_{1}(S,\mathbb{Z})$. The complex $\mathcal{C}(S, \alpha)$ is a member of a family of complexes that generalise the complex of curves to study $\mathcal{T} $. For example, in \cite{BBM} to calculate cohomological properties of $\mathcal{T}$, in \cite{HM} to reprove a result of Birman-Powell about the generating set of the Torelli group of a surface with genus at least three, and in \cite{Chillout} to give a combinatorial description of a Torelli group invariant known as the Chillingworth class. Distances in these complexes are closely related to Seifert genuses of links in 3-manifolds, \cite{Me2}.\\

\textbf{Sublevel Projection.} The Masur-Minsky notion of subsurface projection is not directly applicable to many problems arising from studying $\mathcal{C}(S, \alpha)$. Questions relating to the way the Torelli group restricts to subsurfaces have already been shown to be central to understanding generating sets of the Torelli group, \cite{Putman2}. In Section \ref{sublevel} a notion analogous to subsurface projection from \cite{MasurandMinskyII} is defined by restricting to level sets of the overlap function, to which the ``projections'' are as rigid as possible. A distance formula analogous to that in \cite{MasurandMinskyII} follows from the finite number of critical levels and distance calculations in \cite{Me2}.\\

%Some of the available choices involved in constructing tight geodesic paths make it possible for a subpath to remain within in a subspace of $S$. This is used in section \ref{Cusps} to define cusps and horoballs in $\mathcal{C}(S,\alpha)$. A horoball is an isometrically embedded subcomplex of $\mathcal{C}(S,\alpha)$ closed under the action of a corresponding stabiliser subgroup of the Torelli group. Outside of the cusps, the orbit of any vertex under the action of the Torelli group comes within a fixed diameter of any other vertex.\\

%Despite the fact that geodesics do not fellow travel, it is possible use surfaces corresponding to families of geodesics to define a boundary at infinity for $\mathcal{HS}(S,\alpha)$, analogous the case for $\mathcal{C}(S)$. Do I want to say something about cusps and stabiliser subgroups?\\

\subsection{Acknowlegements} This work was funded by a MOE AcRF-Tier 2 WBS grant Number R-146-000-143-112.

\section{Background and Notation.} \label{background} 
A \textit{curve} $c$ in $S$ is a piecewise smooth, injective map of $S^1$ into $S$ that is not null homotopic. A \textit{multicurve} is a union of pairwise disjoint curves on $S$. Let $\alpha$ be a primitive, nontrivial element of $H_{1}(S,\mathbb{Z})$. The \textit{complex of cycles}, $\mathcal{C}(S,\alpha)$, is a graph whose vertex set is the set of all isotopy classes of oriented multicurves in $S$ in the primitive homology class $\alpha$. There is an edge passing from $m_1$ to $m_2$ if $m_1$ and $m_2$ represent multicurves whose difference is isotopic to the oriented boundary of an embedded subsurface of $S$ with the subsurface orientation. Higher dimensional simplices can also be defined as in \cite{BBM} for example, however they will not be needed in this paper. The \textit{distance}, $d_{\mathcal{C}}(m_{1},m_{2})$, between $m_1$ and $m_2$ in $\mathcal{C}(S,\alpha)$ is defined to be the usual path metric, where all edges have length one.\\

\begin{rem}The assumption that edges of $\mathcal{C}(S,\alpha)$ represent embedded, consistently oriented subsurfaces is not necessary for Theorem \ref{maintheorem}, but makes many definitions and discussions considerably simpler. \end{rem}

Where this does not cause confusion, the same symbol will be used for vertices of $\mathcal{C}(S,\alpha)$ and corresponding multicurves on $S$. Also, multicurves will regularly be confused with the image in $S$ of a particular representative of the isotopy class.\\

The notation $m_{1}, \gamma_{1}, \gamma_{2}, \ldots, m_{2}$ will be used to denote a path $\gamma$ connecting the vertices $m_1$ and $m_2$ (the $\gamma_i$ are the vertices the path passes through).  \\

\textbf{Tightness}. Two multicurves $m_1$ and $m_2$ in general position are said to \textit{fill} $S$ if their complement in $S$ is a union of discs.\\

The notion of ``tightness'' was first defined in \cite{MasurandMinskyI} in order to prove theorems in a complex that is not locally finite. According to the variant of the definition in \cite{Bowditch3}, a path $c_{0}, c_{1}, \ldots, c_{n}$ in Harvey's complex of curves  $\mathcal{C}(S)$ was called \textit{tight} at the index $\left\{i\neq 0,n\right\}$ if every curve on the surface $S$ that crosses $c_i$ also crosses some element of $c_{i-1}\cup c_{i+1}$. Informally, this definition ensures that $c_{i}$ is contained within or on the boundary of the connected subspace of $S$ filled by $c_{i-1}\cup c_{i+1}$. Recall that (for $\mathcal{C}(S)$) any two multicurves representing vertices in $\mathcal{C}(S)$ separated by a distance at least three automatically fill $S$. It therefore automatically follows from the definition that $c_{i}$ is contained within or on the boundary of the connected subspace of $S$ filled by $c_{j}\cup c_k$, for all $j<i$ and $k>i$.\\

However, for $\mathcal{C}(S, \alpha)$, vertices separated by an arbitrarily large distance do not necessarily fill $S$, \cite{Me2}. In Example \ref{alternative}, the distance between $m_1$ and $m_2$ could be made arbitrarily large by increasing the number of bounding pair maps needed to map $m_1$ to $m_2$, without $m_1$ and $m_2$ filling the surface. For this reason, a path $\left\{\gamma_{1}, \ldots, \gamma_{n}\right\}$ in $\mathcal{C}(S, \left[\gamma_{1}\right])$ is defined to be \textit{tight} if, for every curve $c$ in $\gamma_i$, every curve on the surface $S$ that crosses $c$ also crosses some element of $\gamma_{j}\cup \gamma_{k}$, for all $j<i$ and $k>i$. This definition then rules out the possibility that a subpath of a tight geodesic enters a subsurface of $S$ that the two endpoints of the path do not enter.\\

From now on, all geodesic segments will be assumed to be tight.\\

Some background from \cite{Me2} on how to construct geodesics will be briefly repeated here.\\ 

The \textit{overlap function}, also denoted by the symbol $f(n)$, of a null homologous union of curves, $n$, is a locally constant, upper semi continuous, integer valued function defined on $S$ with minimum value zero. For any two points $x$ and $y$ in $S\setminus n$, $f(x)-f(y)$ is the algebraic intersection number of $n$ with an oriented arc with starting point $y$ and endpoint $x$. An important special case is the overlap function of the difference of two homologous multicurves, $m_{2}-m_1$. \\

%An example of an overlap function is given in figure \ref{spiralwithears}\\

The overlap function is not dependent on the choice of oriented arc, because the algebraic intersection number of any closed loop with $n$ is zero. It does however depend on the choice of representatives of the homotopy classes of curves. It will be assumed that the representatives of the homotopy classes are chosen so that the maximum, $M$, of the overlap function is as small as possible. When $n$ does not contain homotopic curves, it is sufficient to assume that the curves in $n$ are in general and minimal position.  For two homologous multicurves $m_1$ and $m_2$, the quantity $M$ will be called the \textit{homological distance}, $\delta(m_{1}, m_{2})$, between $m_1$ and $m_2$. \\

\begin{cor}[Corollary of Theorem 4 of \cite{Me2}]
Let $m_{1}$ and $m_{2}$ be two multicurves corresponding to vertices of $\mathcal{C}(S, \alpha)$. Then $d_{\mathcal{C}}(m_{1}, m_{2})=\delta(m_{1}, m_{2})$. 
\label{j}
\end{cor}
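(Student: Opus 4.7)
Since the statement is labelled a corollary of Theorem~4 of \cite{Me2}, the expected plan is to invoke that theorem after verifying that the vertex set and edge relation of $\mathcal{C}(S,\alpha)$ yield the same distance function as the complex treated in \cite{Me2}. From first principles, I would split the equality into two inequalities proved separately.

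For the upper bound $d_{\mathcal{C}}(m_1,m_2) \le \delta(m_1,m_2)$, I would construct an explicit edge path of length $M=\delta(m_1,m_2)$ via the level sets of the overlap function $f$ of $m_2-m_1$ for minimal-position representatives. For each $k\in\{0,1,\dots,M\}$ let $\gamma_k$ be the multicurve obtained from the oriented boundary of the union of components on which $f\ge k$, reconciled with $m_1$ along the outer boundary so that $\gamma_0=m_1$ and $\gamma_M=m_2$. Every $\gamma_k$ lies in the homology class $\alpha$, and consecutive $\gamma_k,\gamma_{k+1}$ cobound the subsurface $\{f=k+1\}$, giving an edge in $\mathcal{C}(S,\alpha)$. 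Concatenating these edges produces the required path of length $M$.

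For the lower bound, I would use additivity of overlap functions to bound $\delta$ by edge-path length. If $v$ and $w$ are joined by an edge then $w-v$ is the oriented boundary of an embedded subsurface, so its overlap function has variation one and $\delta(v,w)=1$. Given any edge path $m_1=v_0,v_1,\dots,v_n=m_2$, pick representatives in general position with consecutive multicurves disjoint; then up to an additive constant the overlap function of $m_2-m_1$ equals the pointwise sum of the overlap functions of the successive differences $v_{i+1}-v_i$, so its variation is at most $n$. Since the variation of $f_{m_2-m_1}$ for any representatives is an upper bound for $\delta(m_1,m_2)$, this yields $\delta(m_1,m_2)\le n = d_{\mathcal{C}}(m_1,m_2)$.

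The main obstacle is ensuring that the level multicurves $\gamma_k$ genuinely represent vertices of $\mathcal{C}(S,\alpha)$, i.e.\ that none of them contains a null-homologous submulticurve and that their orientations line up consistently across levels. This is exactly where the minimality in the definition of $\delta$ is used, and it is the part of the argument that is the content of Theorem~4 of \cite{Me2}; once granted, the two inequalities match and the corollary follows immediately.
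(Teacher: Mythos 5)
Your two-inequality skeleton is sound, and your lower bound (each edge contributes at most one to the variation of the overlap function, so $\delta(m_1,m_2)\le n$ for any edge path of length $n$) matches what the paper takes for granted from \cite{Me2}. Your upper-bound construction via the level sets of $f$ is also essentially the paper's middle-path/surgery construction. But there is a genuine gap exactly at the point you flag as ``the main obstacle'' and then dismiss: you assert that the problem of intermediate level multicurves $\gamma_k$ containing null homologous submulticurves is ``the content of Theorem~4 of \cite{Me2},'' so that the corollary follows once that theorem is granted. It is not. The complex in \cite{Me2} is only closely related to $\mathcal{C}(S,\alpha)$; the distinguishing feature of $\mathcal{C}(S,\alpha)$ is precisely that its vertices are forbidden from containing null homologous submulticurves, and handling the submulticurves that the surgery produces is the entire content of the paper's proof of this corollary, not something inherited from the cited theorem. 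Deferring it is circular: you grant exactly the statement that has to be proved.

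Concretely, the difficulty is that if the surgery corresponding to $S_{max}(\gamma_i,\gamma_n)$ produces a null homologous submulticurve $n$, one cannot simply delete $n$: doing so in general destroys the edge condition that $\gamma_{i+1}-\gamma_i$ is the oriented boundary of an embedded subsurface, so the resulting sequence is no longer a path in $\mathcal{C}(S,\alpha)$. The paper's proof resolves this by modifying the surgery rather than the output: if $Int(n)$ contains no component of $S_{min}(\gamma_i,\gamma_n)$, one omits from the surgery the components of $S_{max}(\gamma_i,\gamma_n)$ enclosed by $n$; otherwise one additionally performs the surgery corresponding to $Int(n)$ with the enclosed components of $S_{min}(\gamma_i,\gamma_n)$ removed, checks that this still drops $\delta(\,\cdot\,,\gamma_n)$ by exactly one, and then argues by a nesting and orientation argument that only finitely many iterations of this correction are needed before all three required properties hold. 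None of this appears in your proposal, and without it the claimed path of length $\delta(m_1,m_2)$ is not known to exist in $\mathcal{C}(S,\alpha)$.
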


\textbf{Surgery along a horizontal arc}. Since both $S$ and $m_1$ are oriented, if $t(m_{1})$ is a tubular neighbourhood of $m_1$, $t(m_{1})\setminus m_{1}$ consists of two components; one of which can be said to be ``to the right'' of $m_1$ and the other ``to the left''. An arc of $m_{2}\cap(S\setminus m_{1})$ will be said to be \textit{vertical} if, for any tubular neighbourhood of $m_1$, the arc intersects both the component of $t(m_{1})\setminus m_{1}$ to the left of $m_1$ and the component to the right. If an arc of $m_{2}\cap(S\setminus m_{1})$ is not vertical, it will be said to be \textit{horizontal}. A horizontal arc can be either to the left of $m_1$ or to the right of $m_1$. Let $a$ be a horizontal arc with endpoints on a multicurve $m$. A tubular neighbourhood of $m\cup a$ has boundary consisting of a multicurve isotopic to $m$, and some other multicurve, call it $s_{a}(m)$. To \textit{surger} $m$ \textit{along} $a$ is to replace $m$ with $s_{a}(m)$. Surgering along a horizontal arc clearly does not change the homology class of a multicurve.\\

A surgery along a horizontal arc $a$ will be denoted by $s_a$. When talking about surgering along an arc, the implicit assumption is that the arc is horizontal.\\

It is known that all tight paths, geodesic or otherwise, connecting $m_1$ to $m_2$ within $\mathcal{C}(S,\alpha)$ can be constructed as follows: surger $m_1$ along some set of horizontal arcs  of $m_{2}\cap(S\setminus m_{1})$, and/or discard  a null homologous multicurve to obtain $\gamma_1$. Repeat with $\gamma_{1}$ in place of $m_1$ to obtain $\gamma_{2}$, etc. A proof can be found in \cite{Hatcher}.\\

If $a$ is an arc of $m_{2}\cap(S\setminus m_{1})$, it will be said to be \textit{homotopic} to another arc $b$ of $m_{2}\cap(S\setminus m_{1})$ if it can be homotoped onto $b$ by a homotopy that keeps the endpoints of $a$ on $m_2$.\\

The reason for calling arcs horizontal or vertical is illustrated in Figure \ref{fence}. The overlap function is larger on one vertex of a vertical arc than it is on the other, while a horizontal arc has both endpoints in the same level set. When the overlap function of $m_{2}-\gamma_{i}$ is restricted to $m_2$, the horizontal arcs represent local extrema. Informally, homotopy classes of horizontal arcs of $m_{2}\cap (S\setminus \gamma_{i})$ represent the choices available in constructing the next vertex, $\gamma_{i+1}$, along a tight geodesic segment connecting $\gamma_{i}$ to $m_2$.\\% The more homotopy classes of horizontal arcs of $m_{2}\cap (S\setminus \gamma_{i})$, the more flexible the geodesic segment is around the vertex $\gamma_{i}$.\\% There could be arbitrarily many local extrema of the overlap function, however the space of tight geodesics can still be finite dimensional because surgering along homotopic arcs gives the same multicurve up to isotopy.\\

%Put $m_1$ and $m_2$ in general and minimal position, and define an upper semi-continuous function on $S$ called the \textit{overlap function}, as defined in \cite{Me2}. To construct paths, multicurves are surgered along what Hatcher, \cite{Hatcher}, calls ``innermost arcs'', and a null homologous submulticurve per edge is discarded. A surgery along an arc $a$ will sometimes be denoted by $s_a$. Innermost arcs, or in the notation of \cite{Me2},``horizontal arcs'' of $m_{2}\cap (S\setminus m_{1})$ or $m_{2}\cap (S\setminus m_{2})$ are found on the boundary of subsurfaces of $S$ on which the overlap function has saddle points, local extrema or where the topology or number of sides of the levels sets of the overlap function changes. A path connecting $m_1$ and $m_2$ is a geodesic iff its length is equal to the maximum value of the overlap function of $m_{2}-m_1$, \cite{Me2}. A geodesic segment constructed as in \cite{Me2} by repeatedly surgering along the horizontal arcs on the boundary of the subsurface of $S$ on which the overlap function takes its maximum is called a \textit{middle path}. \\

\textbf{Middle paths}. Let $S_{max}$ be the subsurface of $S$ on which the overlap function of $m_{2}-m_1$ has its maximum and $S_{imax}$ the subsurface of $S$ on which the overlap function of $m_{2}-\gamma_i$ has its maximum. Similarly for $S_{min}$ and $S_{imin}$. Also let $S_{a\leq f \leq b}$ be the subsurface of $S$  on which $a\leq f(m_{2}-m_{1})\leq b$. The boundary of  $S_{max}$ is a union of horizontal arcs of $m_{2}\cap (S\setminus m_{1})$ to the right of $m_1$ and horizontal arcs of $m_{1}\cap (S\setminus m_{2})$ to the left of $m_2$. It is not hard to check that surgering $m_1$ along the arcs of $m_{2}\cap (S\setminus m_{1})$ on the boundary of $S_{max}$ gives a multicurve $\gamma_{1}$ and a curve $-\partial S_{max}$, where $\delta(\gamma_{1}, m_{2})=\delta(m_{1}, m_{2})-1$ and the vertices $\gamma_{1}$ and $m_1$ are connected by an edge. Construct $\gamma_{2}$ in the same way, but with $S_{1max}$ instead of $S_{max}$ and $\gamma_{1}$ in place of $m_1$, similarly for $\gamma_{3}$, etc. A geodesic constructed in this way will be called a \textit{middle path}.  \\

\textbf{Critical levels and level sets.} If $\gamma_{i}$ is a vertex on a middle path, informally, a critical level should be thought of as a value of $i$ for which the level set $S_{M-i \leq f}$ is ``different'' from the previous level set $S_{M-i+1 \leq f}$. By different, is meant either the topology, or the number of edges on the boundary of the level set changes. The critical levels along geodesic segments are therefore closely related to local extrema or saddles of the overlap function. When trying to make this notion precise, there are some technicalities involved, especially for paths that are not middle paths, so a somewhat different approach will be taken in Section \ref{proof}. \\

Usually, a Morse theory is set up to compute a homology theory. It is not clear what the analogue, if any, of a homology theory might be in this case. Path construction in $\mathcal{C}(S,\alpha)$ has a lot of similarities with tracing out the stable or unstable manifolds coming from the local extrema of the overlap function of $m_{2}-m_{1}$. The finite dimensions of the space of geodesics might then be thought of as coming from the choices about the order in which different stable or unstable manifolds are traced out.\\

%Say something about how you can tell if a surgery is necessary or not, and do diagram of ``Morse function'' idea.\\
%This path construction stuff is explained in all it's gory detail in Section 3 of ``Geometry of the Homology Curve Complex''. \\

%Arcs on left and arcs on right, and why the two can't be mixed.\\

\textbf{Labelling geodesic segments and surgeries.} In this paper, surgeries will be denoted by listing the elements of a set of arcs along which a multicurve is surgered. The superscripts on the arcs in the set determine the multicurve along which the surgery is performed, and the subscripts label the elements in the set. For geodesic segments in a one parameter family, the superscripts will denote the element of the family, and the subscripts determine the vertex of a geodesic segment.\\

\subsection{Independent Surgeries} \label{independent}When making statements about how to perturb the geodesic segment $m_{1}, \gamma_{1}, \ldots, m_{2}$, it is necessary to have a concept of what surgeries are equivalent to or dependent on each other. In order to understand this, we first need a notation for the smallest subsurface inside which a multicurve is altered by a surgery and the subsequent isotopy to put it in minimal and general position with $m_2$.\\

A homotopy class of arcs with representative $a$ determines a rectangle $R(a)$ in $S$, as shown in Figure \ref{chfour1}. The ``short sides'' of $R(a)$ are arcs in the homotopy class.\\

\begin{figure}
\begin{center}
\def\svgwidth{12cm}
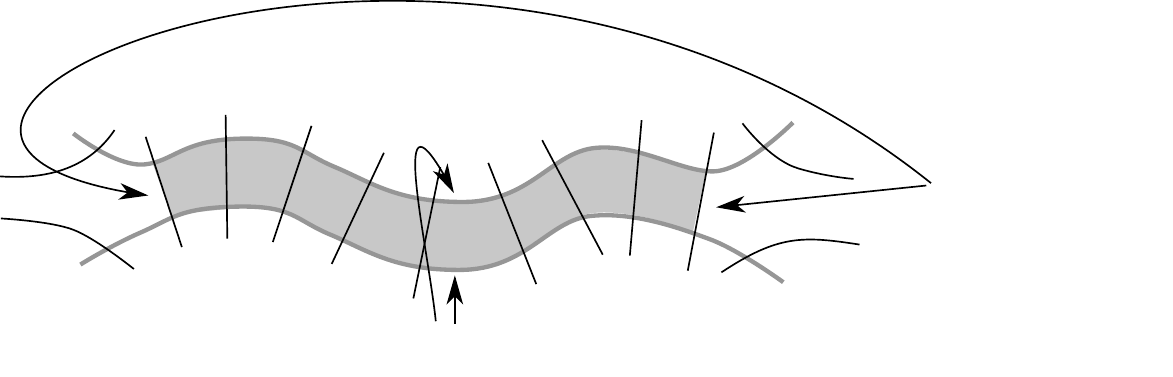
\caption{The rectangle representing a homotopy class of arcs. Diagram taken from \cite{Me2}.}
\label{chfour1}
\end{center}
\end{figure}

When constructing a path in $\mathcal{C}(S, \alpha)$, a surgery along an arc $a$ is \textit{independent} of a surgery along an arc $b$ if $R(a)\cup N(a)$ and $R(b)\cup N(b)$ are disjoint, where $N(a)$ is the null homologous submulticurve (if any) discarded after surgering along $a$, and $N(b)$ the null homologous submulticurve (if any) discarded after surgering along $b$.\\% When discussing geodesic construction, where this does not lead to confusion, the null multicurves that are discarded will not be mentioned, and the focus will be on the surgeries.\\

When $\{a_{j}^{i+1}\}$ is a collection of horizontal arcs of $m_{2}\cap (S\setminus \gamma_{i})$, it could happen that all but one, $g_k$, of the arcs of $m_{2}\cap(S\setminus \gamma_{i})$ on the boundary of a polygon $G_k$ in $S\setminus (m_{2}-\gamma_{i})$ are homotopic to one of the arcs $\{a_{j}^{i+1}\}$. The surgeries corresponding to  $\{a_{j}^{i+1}\}$ are independent of the surgeries corresponding to the arcs $\{b_{l}^{m}\}$ if $\cup_{j}R(a_{j}^{i+1})\cup_{k}G_{k}\cup_{k}R(g_{k})\cup N(\{a_{j}^{i+1}\})$ is disjoint from $\cup_{l}R(b_{l}^{m})\cup_{n}G_{n}\cup_{n}R(b_{n})\cup N(\{b_{l}^{m}\})$.\\

\textbf{Equivalent Surgeries}. It can happen that two independent surgeries, followed by discarding different null homologous submulticurves can give the same result up to isotopy. Two such surgeries will be said to be \textit{equivalent}. An example of this can be found in Example \ref{alternative}. The curve $\gamma_{9}$ is obtained from $m_2$ by applying a bounding pair map four times. There are two horizontal arcs of $m_{2}\cap (S\setminus\gamma_{9})$, and surgering along either of them results in untwisting one pair of twists.\\

\section{Jacobi Fields}\label{Jacobi}

In order to motivate the definition of Jacobi fields, it helps to have a few simple examples in mind. These examples are given in Subsection \ref{examples}. Subsection \ref{Jacobi} then defines and constructs one parameter families and their associated Jacobi fields. Finally, Subsection \ref{linearcombinations} makes rigorous the notion of a linear combination of Jacobi fields.\\

\subsection{Examples}\label{examples}

\begin{figure}
\centering
\includegraphics[width=\textwidth]{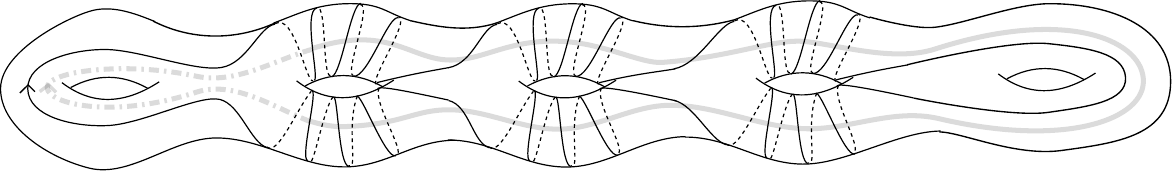}
\caption{The curves $m_1$ and $m_2$ (grey) from Example \ref{alternative}. The arc $a_1$ is the fat dotted grey line. }
\label{bigheaptwist}
\end{figure}

\begin{figure}
\centering
\includegraphics[width=\textwidth]{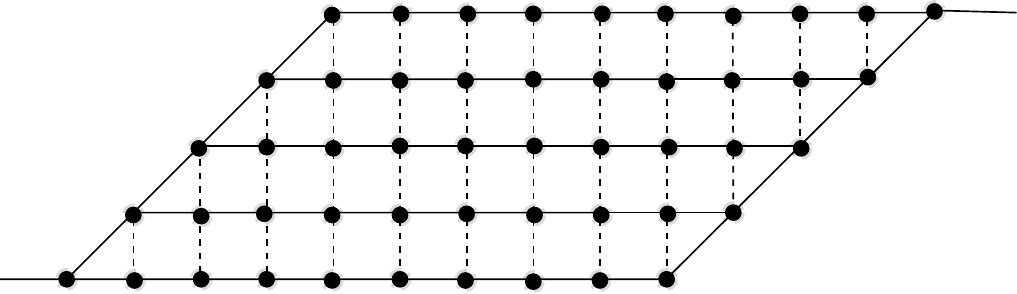}
\caption{A one parameter family of geodesics in $\mathcal{C}(S,[m_{1}])$ from Example \ref{alternative}. Geodesic segments in the one parameter family are represented by solid lines, other edges in the graph by dotted lines.}
\label{subgraph}
\end{figure}

\begin{ex}[Alternative Surgeries] \label{alternative}

The curves $m_1$ and $m_2$ are shown in Figure \ref{bigheaptwist}. The geodesic $m_{1}, \gamma_{1},\ldots,\gamma_{12}, m_{2}$ is constructed by first surgering $m_1$ along the arc $a_1$ and discarding a resulting null homologous multicurve to obtain $\gamma_1$. The curve $\gamma_1$ has one fewer of the pairs of twists furthest to the left. The multicurve $\gamma_{2}$ is obtained similarly by surgering along an arc $v_{1}\circ a_{1} \circ v_{2}$, where $v_1$ and $v_2$ are arcs of $m_{2}\cap (S\setminus m_{1})$ to either side of $a_1$. This surgery undoes the next leftmost pair of twists. The curves $\gamma_{3}$ and $\gamma_4$ are obtained similarly. Once we get to $\gamma_{5}$, we start unwinding pairs of twists inside the genus one subsurface to the right of the first subsurface. Last of all, the twists inside the rightmost subsurface are undone.\\

The decisions involved in constructing $m_{1}, \gamma_{1},\ldots,\gamma_{12}, m_{2}$ were completely arbitrary. For example, we could construct a family of geodesic segments $m_{1}, \gamma_{1}^{k},\ldots,\gamma_{12}^{k}, m_{2}$, as follows: $m_{1}, \gamma_{1}^{k},\ldots,\gamma_{12}^{k}, m_{2}$ is the geodesic segment obtained by first untwisting $k$ twists, working from right to left, and then untwisting from left to right. This family of geodesic segments in $\mathcal{C}(S,\alpha)$ is depicted in Figure \ref{subgraph}.\\

% Similarly, once $\gamma_{4}$ is reached, there is the choice; we could continue untwisting from left to right, or start undoing the rightmost twists. This choice corresponds to another family of geodesic segments $m_{1}, \delta_{1}^{k},\ldots, m_{2}$. Here $\delta_{i}^{k}$ is the same curve as $\gamma_{i}$ for $1\leq i \leq 4$, $\delta_{5}^{k}, \delta_{6}^{k},\ldots, m_{2}$ is obtained by first untwisting $k$ of the rightmost twists, and then untwisting from left to right. Finally, there is a family of geodesic segments $m_{1}, \eta_{1}^{k}, \ldots, m_2$, where $\eta_{i}^{k}$ corresponds to $\gamma_{i}$ for $8\leq i$, $\eta_{7}^{k}$ to $\eta_{8-k}^k$ are obtained from $\gamma_{8}$ by untwisting from left to right, and then $\eta_{8-k-1}^k$ to $m_1$ are obtained by untwisting from right to left.\\

\end{ex}

In the previous example, it is the assumption that paths are tight that rules out the possibility of untwisting in the middle first.\\

\begin{figure}
\begin{center}
\def\svgwidth{\textwidth}
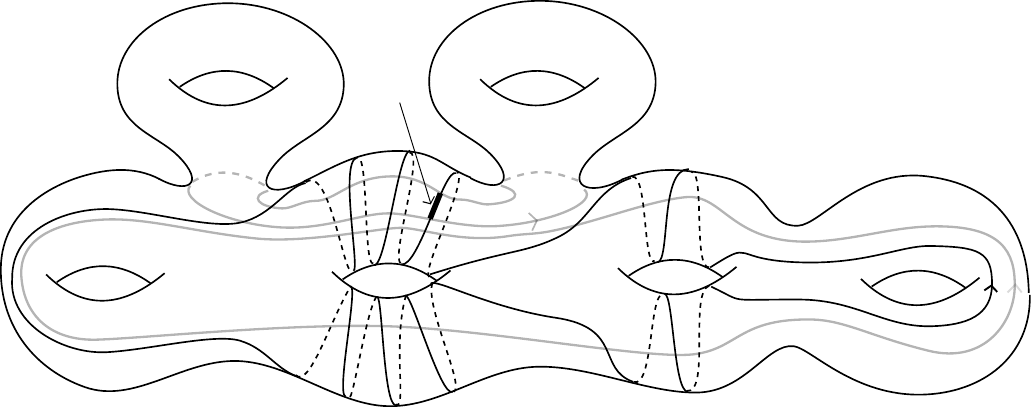
\caption{The multicurve $m_1$ is shown in grey.}
\end{center}
\label{crabfigure}
\end{figure} 

\begin{figure}
\centering
\includegraphics[width=\textwidth]{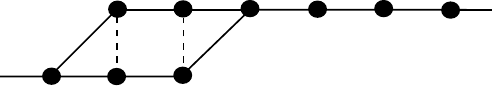}
\caption{Another one parameter family of geodesics in $\mathcal{C}(S,[m_{1}])$. Geodesic segments are represented by solid lines.}
\label{subgraph2}
\end{figure}

\begin{ex}[Optional Surgeries] \label{crab}

In this example, the geodesic $m_{1}, \gamma_{1}, \ldots, \gamma_{5}, m_{2}$ is constructed by untwisting from right to left in Figure 4. When constructing $\gamma_{1}$, in addition, we might also have surgered along the arc $a$ shown. If we do not do this, at the very latest, $\gamma_{2}$ has to be surgered along a set of arcs including $a$ to obtain $\gamma_3$. This gives the (small) one parameter family $m_{1}, \gamma_{1}^{1}, \ldots, m_{2}$ depicted in Figure \ref{subgraph2}.

\end{ex}

\subsection{One parameter families and Jacobi fields.}\label{Jacobi}
In this subsection, different ways of constructing one parameter families of geodesic segments will be discussed. The main difficultly is in understanding the circumstances under which these constructions can be applied without causing contradictions with path construction on some other subsegment. The one parameter families are used to define ``Jacobi fields''. Before defining one parameter families, it is necessary to establish a canonical choice of isotopy classes of multicurves, so as to be able to identify arcs of $m_{2}\cap(S\setminus \gamma_{i})$ for different values of $i$. \\

\textbf{Choices of Isotopy Classes}. Put $m_1$ and $m_2$ in general and minimal position. The representative of the isotopy class $\gamma_1$ is then obtained as follows: first perform the surgeries corresponding to $\{a_{i}^{1}\}$ on $m_1$. Any part of the resulting multicurve either coincides with $m_1$ outside of an $\epsilon$-neighbourhood of $\cup_{i}R(a_{i}^{1})\cup_{j}G_{j}\cup_{j}R(g_{j})$, or if it has become part of a null homologous submulticurve, it might have been discarded. The multicurve $\gamma_2$ is then obtained by performing the surgeries corresponding to $\{a_{i}^{2}\}$ on this representative of the isotopy class $\gamma_1$. Any part of the resulting multicurve outside of an $\epsilon$-neighbourhood of $\cup_{i}R(a_{i}^{2})\cup_{j}G_{j}\cup_{j}R(g_{j})$ either coincides with $\gamma_{1}$ or is discarded, etc.\\

Jacobi fields come about in a few different ways; from optional surgeries, alternative surgeries or choices about null homologous submulticurves. First of all Jacobi fields coming from optional surgeries will be defined. \\

\textbf{Optional Surgeries}. Let $a$ be a horizontal arc of $m_{2}\cap(S\setminus \gamma_{i})$, that defines an \textit{optional} surgery in the following sense: $s_a$ is independent of the surgeries along the set of arcs $\{a_{j}^{i+1}\}$ performed on $\gamma_{i}$ to obtain $\gamma_{i+1}$. In addition, surgering along the set of arcs $\{a\}\cup\{a_{j}^{i+1}\}$ determines an edge of $\mathcal{C}(S,\alpha)$.\\

In Example \ref{crab}, there was a subinterval of $\gamma$ along which $s_a$ determined an optional surgery. Further along $\gamma$ at vertex $\gamma_{2}$, $s_a$ was one of the surgeries performed to obtain $\gamma_{3}$. As a result, the path $\gamma^{1}$ and $\gamma$ then converged on vertex $\gamma_3$. If $s_a$ is not equivalent to a surgery applied to one of the multicurves $\{\gamma_{i}\}$ somewhere along $\gamma$, then there is necessarily a surgery $s_b$ applied to $\gamma_{j}$ for some $j$, where the arc $b$ has one or more endpoints in common with $a$. The surgery $s_a$ is then no longer defined on $\gamma_{i}$, for $j<i$, and tightness rules out the possibility of applying $s_{a}^{-1}$ to $\gamma_{i}^{1}$ for $i<j$. It is then not clear how $\gamma_{i}^{1}$ should be related to $\gamma_{i}$ for $j<i$. In summary - the surgery $s_a$ does not determine a one parameter family unless it is equivalent to a surgery that is actually performed somewhere along $\gamma$. \\

\textbf{One parameter families}. Let $I_a$ be the largest subinterval of $m_{1}, \gamma_{1}, \gamma_{2},\ldots, m_2$ on which the arc $a$ determines an optional surgery on the preceding multicurve. By assumption the last vertex of $I_a$ is the vertex of $\gamma$ that is surgered along the arc $a$, for the reasons explained in the previous paragraph.  For $\gamma_{i}$ in $I_a$, let $\gamma_{i}^{1}$ be obtained from $\gamma_{i}$ by applying $s_a$. The vertex $\gamma_{i}^{1}$ coincides with $\gamma_i$ outside of $I_a$. The geodesic segment $m_{1}, \gamma_{1}^{1}, \gamma_{2}^{1},\ldots, m_2$ is the first element of the one parameter family above $\gamma$. \\

\begin{figure}
\begin{center}
\def\svgwidth{12cm}
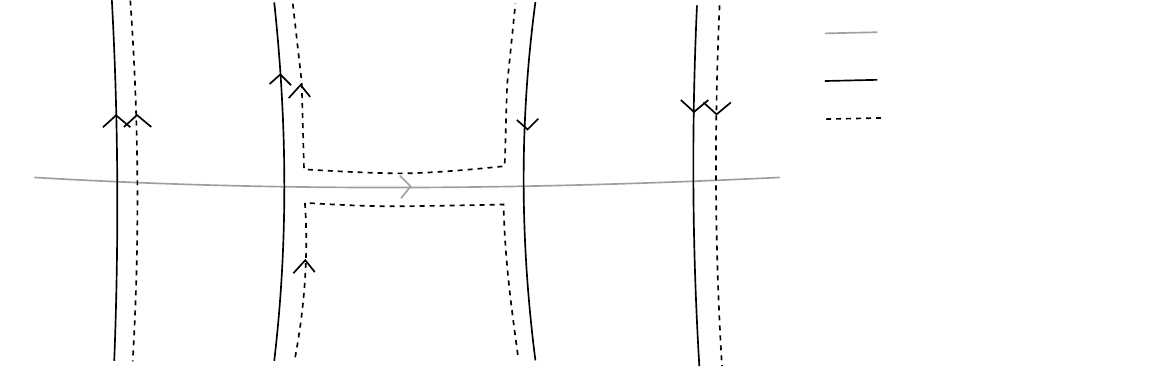
\caption{Consecutive surgeries.}
\label{fence}
\end{center}
\end{figure}

After surgering along a horizontal arc $a$ of $m_{2}\cap (S\setminus \gamma_{i})$, suppose a new horizontal arc, $v\circ a\circ w$ is created, as shown in Figure \ref{fence}. Surgering $\gamma_{i+1}$ along $v_{1}\circ a \circ w_{1}$ will be thought of as as being the most obvious continuation of the surgery along $a$.\\

To construct the second element of the one parameter family above $\gamma$, call it $\gamma^{2}$, let $v_{1}\circ a \circ w_1$ be the arc of $m_{2}\cap (S\setminus \gamma_{i}^{1})$ obtained by concatenating $a$ with arcs $v_1$ and $w_1$ of $m_{2}\cap (S\setminus \gamma_{i})$ on either side of it. If $v_{1}\circ a \circ w_1$  is not a horizontal arc of $m_{2}\cap(S\setminus \gamma_{i}^{1})$ that represents an optional surgery for some $i$, we are finished. Otherwise, $m_{1}, \gamma_{1}^{2}, \gamma_{2}^{2},\ldots, m_2$ is constructed from $m_{1}, \gamma_{1}^{1}, \gamma_{2}^{1},\ldots, m_2$ analogously to the way $m_{1}, \gamma_{1}^{1}, \gamma_{2}^{1},\ldots, m_2$ was constructed from $m_{1}, \gamma_{1}, \gamma_{2},\ldots, m_2$. Let $n$ be the natural number such that the one parameter family can not be extended past $m_{1}, \gamma_{1}^{n}, \gamma_{2}^{n},\ldots, m_2$.\\

A \textit{Jacobi field}, $J(a,\gamma)$, is associated with the one parameter family as follows: The \textit{magnitude} of  $J(a,\gamma)$ at vertex $\gamma_i$ is equal to the maximum of $d(\gamma_{i}, \gamma_{i}^{k})$, for $1\leq k\leq n$. The \textit{support} of $J(a,\gamma)$ is the largest subpath of $m_{1}, \gamma_{1}, \gamma_{2},\ldots, m_2$ for which $\gamma_{i}^{k}\neq \gamma_{i}$ for some $k$. Along the support of $J(a,\gamma)$, the \textit{direction} of $J(a,\gamma)$ at the vertex $\gamma_i$ is parallel to the edge passing from $\gamma_i$ to the nearest vertex on a neighbouring geodesic segment in the one parameter family.\\

\textbf{Alternative Surgeries}.  Suppose $\gamma_{h+1}$ was constructed from $\gamma_h$ by surgering along arcs $\{a_{j}^{h+1}\}$ and possibly discarding a null homologous submulticurve $N(h+1)$, but that, alternatively, a geodesic segment could have been constructed by surgering along the arcs $\{c_{j}^{h+1}\}$ instead of some of the $\{a_{j}^{h+1}\}$ (call this set of arcs $\{b_{j}^{h+1}\}$) and possibly discarding a null homologous submulticurve $N(c,h+1)$. Whenever the set $\{c_{j}^{h+1}\}$ could not have been replaced by a smaller subset, the surgeries along $\{c_{j}^{h+1}\}$ will be called \textit{alternative surgeries}.\\% The associated Jacobi field will be denoted by $J(\{c_{j}^{h+1}\},\gamma)$. 

As when constructing one parameter families coming from optional surgeries, it is necessary to rule out the possibility that an alternative surgery is incompatible with another surgery further along $\gamma$. The easiest way of understanding what is meant by ``incompatible'' in this context, is to try to construct a one parameter family, and see what conditions are needed to do this successfully.\\ 

% If $\{s_{c_{i}}\}$ is a set of alternative surgeries applied to $\gamma_{i}$ of the geodesic $m_{1}, \gamma_{1},\ldots, \gamma_{j}, m_{2}$, as for Example \ref{alternative}, the $\{s_{c_{i}}\}$ can not clash with the surgeries $\{s_{a_{i}}^{k}\}$ performed on every multicurve $\gamma_{k}$ for $$i\leq k\leq j-1$. By ``clash'' is meant here, the $\{s_{c_{i}}\}$ clash with a later surgery $s_d$ if after surgering $\gamma_{i}$ along the set of arcs $\{c_{i}\}$ and discarding null homologous submulticurves that need to be discarded, with the standard choice of isotopy class, the arc $d$ no longer has both endpoints on the resulting multicurve.\\

For ease of construction, it will first be assumed that $\{b_{j}^{h+1}\}$ is all of $\{a_{j}^{h+1}\}$. To start off with, also consider an example such as Example \ref{alternative}, where for $\gamma_{h}$ we have an alternative set of surgeries $\{c_{j}^{h+1}\}$. We would like to construct a one parameter family as follows: $\gamma_{i}^{1}$ coincides with $\gamma_i$ for $i\leq h$. The multicurve $\gamma_{h+1}^{1}$ is obtained from $\gamma_{h}$ by surgering along $\{c_{j}^{h+1}\}$ and possibly discarding a null homologous submulticurve $N(c,h+1)$. Unless $\{a_{j}^{i}\}$ is equivalent to $\{c_{j}^{h+1}\}$, for $h+1<i$, $\gamma_{i+1}^{1}$ is obtained by surgering $\gamma_{i}^{1}$ along $\{a_{j}^{i}\}$. If $\{a_{j}^{i}\}$ for $h+1<i$ is equivalent to $\{c_{j}^{h+1}\}$, denote this value of $i$ by $i^{*}$, then $\gamma^{1}$ coincides with $\gamma$ from $\gamma_{i^{*}+1}$ onwards. The next geodesic, $\gamma^{2}$, in the one parameter family is constructed analogously, with the alternative set of surgeries of the form $\{v_{j}\circ c_{j}^{h+1}\circ w_{j}\}$, where possible. Similarly for $\gamma^{3}$, etc. \\

To make this construction work, it is sufficient that the surgeries along the arcs $\{a_{j}^{i}\}$ for $h+1\leq i \leq i^{*}$ are independent of those along $\{c_{j}^{h+1}\}$. As Example \ref{replace} illustrates, this is a stronger condition than is desirable to impose in general.\\

\begin{figure}
\begin{center}
\def\svgwidth{12cm}
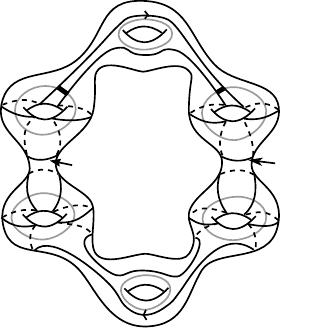
\caption{The multicurve $m_2$ is shown in grey.}
\label{diagonal}
\end{center}
\end{figure}

\begin{ex}
\label{replace}
The curves $m_1$ and $m_2$ are drawn in Figure \ref{diagonal}. The geodesic segment $\gamma$ is constructed by unwinding the twists from top to bottom, starting with a surgery along the thick black arcs $a_{1}^{1}$ and $a_{2}^{1}$ in Figure \ref{diagonal}. The geodesic segment $\gamma^{1}$ is constructed by first surgering along the arcs $c_{1}$ and $c_{2}$ shown in Figure \ref{diagonal}, and then along $\{a_{1}^{1}, a_{2}^{1}\}$, $\{a_{1}^{3}, a_{2}^{3}\}$, $\{a_{1}^{4}, a_{2}^{4}\}$ etc. The surgery along $\{c_{1}, c_{2}\}$ is not independent of the surgery along $\{a_{1}^{2}, a_{2}^{2}\}$; the arcs have the same endpoints, but is intended to replace the surgery along $\{a_{1}^{2}, a_{2}^{2}\}$. 
\end{ex}

Going back to the previous problem of constructing one parameter families for alternative surgeries, the alternative surgeries along the arcs $\{c_{j}^{h+1}\}$ will be said to \textit{replace} the surgeries along the arcs $\{a_{j}^{k+1}\}$ if the sets of arcs, $\{c_{j}^{h+1}\}$ and $\{a_{j}^{k+1}\}$, representing the surgeries can be chosen to have the same endpoints. If $\{c_{j}^{h+1}\}$ replaces the surgeries along the set of arcs $\{a_{j}^{k+1}\}$, let $\gamma^{1}$ be the geodesic segment that coincides with $\gamma$ for vertices $\gamma_{i}$, with $i\leq h$, $\gamma_{h+1}^{1}$ is obtained from $\gamma_{h}^{1}$ by surgering along $\{c_{j}^{h+1}\}$, for $h+2\leq i \leq k$, $\gamma_{i+1}^{1}$ is obtained from $\gamma_{i}^{1}$ by surgering along $\{a_{j}^{i}\}$, and for $k<i$, $\gamma_{i+1}^{1}$ is obtained from $\gamma_{i}^{1}$ by surgering along $\{a_{j}^{i+1}\}$. Further elements of the one parameter family, if any, are obtained similarly.\\

When for $h<k$, the surgeries along $\{c_{j}^{h+1}\}$ replace the surgeries along $\{a_{j}^{k+1}\}$, what conditions ensure that the alternative surgery determines a one parameter family? It is necessary and sufficient to assume that the surgeries along the arcs $\{a_{j}^{i}\}$ for $h+1\leq i \leq i^{*}$ are independent of those along $\{c_{j}^{h+1}\}$, where $i^{*}=k$. \\

%\begin{itemise}
%\item the arcs $\{a_{j}^{i}\}$ for $h+1\leq i \leq i^{*}$ are arcs of $m_{2}$ with both endpoints on $\gamma_{h+1}^{1}$, where $\gamma_{h+1}^{1}$ is the standard representative of its isotopy class. 
%\item none of the arcs $\{c_{j}^{h+1}\}$ has an endpoint on $N_i$ for $h+1\leq i \leq i^{*}$
%\end{itemize}
%If one or more of the arcs $a_{j}^{i}$ had both endpoints on $N_c$, we could still construct $\gamma^{1}$ by discarding the null homologous multicurve $N_c$ surgered along $a_{j}^{i}$, instead of surgering along $a_{j}^{i}$ later on.

%The above is not so well defined. Any arcs in a given homotopy class are equivalent. Need to specify what arcs in the equivalence class we mean; some arcs in the homotopy class might satisfy these conditions and others not. the surgeries along the c_j s might break up homotopy classes.

Now suppose the more general case; the set of arcs $\{c_{j}^{h+1}\}$ determine alternative surgeries to a subset $\{b_{j}^{h+1}\}$ of the arcs $\{a_{j}^{h+1}\}$. One possibility is that the arcs $\{a_{j}^{h+1}\}\setminus \{b_{j}^{h+1}\}$ determine optional surgeries. If so, a one parameter family is constructed as in the previous few paragraphs, with the added constraint that the arcs $\{a_{j}^{h+1}\}\setminus \{b_{j}^{h+1}\}$ determine optional surgeries on each of the geodesics in the one parameter family. \\%Otherwise, there should be a subsurface $S_1$ to which the restriction of the one parameter family looks like one of the previous cases for a one parameter family coming from alternative surgeries in the restriction to $S_1$. \\
%Informally, to make this work, for $h+1 \leq i \leq i^{*}$, it must be the case that the set of arcs $\{ a_{j}^{i+1} \}$ splits into two subsets; $\{b_{j}^{i+1}\}$ and $\{d_{j}^{i+1}\}$, such that surgeries along arcs $\{b_{j}^{i+1}\}$ construct a path in the subsurface projection to $S_1$, and surgeries along $\{d_{j}^{i+1}\}$ construct a path in the projection to the complement of $S_1$. \\

%Without determining an explicit subsurface $S_1$, 
When the arcs $\{a_{j}^{h+1}\}\setminus \{b_{j}^{h+1}\}$ do not determine optional surgeries, construct $\gamma^{1}$ as follows, where possible: For $i\leq h$, $\gamma_{i}^{1}$ coincides with $\gamma_{i}$. The multicurve $\gamma_{h+1}^{1}$ is constructed by surgering $\gamma_{h}^{1}$ along $\{c_{j}^{h+1}\}\cup (\{a_{j}^{h+1}\}\setminus \{b_{j}^{h+1}\})$. The sets of arcs $\{c_{j}^{h+1}\}$ and $\{b_{j}^{h+1}\}$ are both necessarily on the same side of $\gamma_h$, because surgering $\gamma_{i}$ along arcs that are not all on the same side of $\gamma_{i}$ does not determine an edge of $\mathcal{C}(S,[\gamma_{i}])$. Now assume there is a smallest possible subset $\{d_{j}^{h+2}\}$ of $\{a_{j}^{h+2}\}$ such that surgering $\gamma_{h+1}^{1}$ along $\{d_{j}^{h+2}\}\cup \{b_{j}^{h+1}\}$ gives a multicurve $\gamma_{h+2}^{1}$, where $\gamma_{h+2}^{1}$ and $\gamma_{h+1}^{1}$ are connected by an edge, and $\gamma_{h+2}^{1}$ is one unit closer to $m_1$ than $\gamma_{h+1}^{1}$. The multicurve $\gamma_{h+3}^{1}$ is constructed similarly, etc. up to $\gamma_{i^{*}}^{l}$, where the homotopy classes with representatives $\{a_{j}^{i^{*}+1}\}$ have endpoints in common with the homotopy classes of arcs with representatives $\{c_{j}^{h+1}\}$.\\

To make this construction work, the arcs $\{d_{j}^{h+2}\}$ are necessarily on the same side of $\gamma_{h+1}^{1}$ as the arcs $\{a_{j}^{h+1}\}$ are on $\gamma_{h}$, from which it follows that all the arcs $\{a_{j}^{h+2}\}$ are on the same side of $\gamma_{h+1}$ as the arcs $\{a_{j}^{h+1}\}$ are on $\gamma_h$. By induction, this is true up to and including $\{a_{j}^{i^{*}+1}\}$.\\

Arcs of $m_{2}\cap(S\setminus \gamma_{h})$ on the same side of $\gamma_h$ and with at least one endpoint in common necessarily coincide. For this reason, the set of representatives of homotopy classes of arcs, $\{a_{j}^{i^{*}+1}\}$, can be chosen to have at least one arc in common with $\{c_{j}^{h+1}\}$. It can be assumed without loss of generality that the arcs $\{c_{j}^{h+1}\}$ are a subset of $\{a_{j}^{i^{*}+1}\}$, because otherwise it will be seen in the next subsection that the resulting Jacobi fields can be obtained as linear combinations of Jacobi fields coming from one parameter families for which the arcs $\{c_{j}^{h+1}\}$ are a subset of $\{a_{j}^{i^{*}+1}\}$. For $i^{*}< i$, $\gamma_{i}^{1}$ therefore coincides with $\gamma_{i}$. Further elements in the one parameter family are also constructed inductively, as for the previous cases studied.\\

To make the previous construction work, we have already seen that the arcs $\{c_{j}^{h+1}\}$ necessarily represent surgeries that are actually performed on some $\gamma_i$; the one parameter family just changes the order in which commuting surgeries are performed. Also, the arcs $\{a_{j}^{i+1}\}$ all have to be on the same side of their respective multicurves for $h\leq i\leq i^{*}$ and it must be possible to find arcs $\{d_{j}^{i+2}\}$ for $h\leq i \leq i^{*}-1$. The existence of arcs $\{d_{j}^{i+2}\}$ is a strong assumption to make, for example, it rules out the possibility that $\{a_{j}^{h+2}\}$ consists of a single arc.\\% It may happen that if $\gamma_{h+2}^{1}$ is to be at least one unit closer to $m_2$ than $\gamma_{h+1}^{1}$, $\{d_{j}^{h+2}\}$ would have to be all of $\{a_{j}^{h+2}\}$, which is not allowed. The arcSince $\{b_{j}^{h+1}\}$ do not represent optional surgeries, this would rule out the possibility that $\gamma_{h+2}^{1}-\gamma_{h+1}^{1}$ is the boundary of a consistently oriented subsurface of $S$, i.e. $\gamma_{h+2}^{1}$ and $\gamma_{h+1}^{1}$ could not be joined by an edge. 

\textbf{Null homologous submulticurves}. Suppose $\gamma_{i}$ has null homologous submulticurves $N_{1}, N_{2}, \ldots N_m$ in the oriented isotopy class $n$. It can happen that discarding one of these submulticurves from $\gamma_{i}$ decreases the homological distance from $m_{2}$. When this happens, $n$ will be called \textit{nonperipheral} in $\gamma_{i}$, otherwise $n$ is \textit{peripheral} in $\gamma_{i}$. In Example \ref{crab}, the multicurves $\gamma_{1}$ and $\gamma_{1}^{1}$ have peripheral null homologous submulticurves.\\

When the subsurface bounded by $N_1$ is disjoint from the subsurface bounded by $\gamma_{i+1}-\gamma_{i}$, discarding $N_1$ can be treated as an optional surgery. A one parameter family is obtained when, for some $i<k$, the null homologous submulticurve $N_1$ is discarded from $\gamma_{k}$. A second geodesic segment in the one parameter family is constructed by taking the most obvious continuation of discarding $N_1$, namely discarding $N_2$, etc.\\

Similarly, if $n$ is nonperipheral, discarding $N_1$ is analogous to an alternative surgery. Discarding $N_1$ commutes with any set of surgeries along arcs whose endpoints are not on $N_{1}$, and it is clear how to construct a one parameter family by changing the order of commutative operations. Otherwise, let $\gamma$ be the path with all the $N_{i}$s discarded as soon as possible. The numbering of the $\{N_{i}\}$ is assumed to reflect the order in which the multicurves are discarded. Let $\gamma^{1}$ be a geodesic segment for which all the $\{N_{i}\}$ but $N_1$ are discarded, and let $\gamma_{k+1}^{1}$ be the first vertex of $\gamma^{1}$ that does not coincide with the corresponding vertex on $\gamma$. Let $\{c_{j}^{k}\}$ be the set of arcs along which $\gamma_{k}^{1}$ is surgered to obtain $\gamma_{k+1}^{1}$. The set $\{c_{j}^{k}\}$ is obtained by modifying $\{a_{j}^{k+1}\}$ as follows: if $a_{j}^{k+1}$ is to the right of $\gamma_{k}$, whenever $a_{j}^{k+1}$ intersects $N_1$, replace $a_{j}^{k+1}$ with the intersection of $a_{j}^{k+1}$ with the subsurface of $S$ to the right of $N_1$. Since $N_1$ is null homologous, this is necessarily a set of horizontal arcs. If $a_{j}^{k+1}$ does not intersect $N_1$, leave it unchanged. When the arc $a_{j}^{k+1}$ is to the left of $\gamma_{k}$, replace it with the intersection of $a_{j}^{k+1}$ with the subsurface of $S$ to the left of $N_1$. Similarly, $\{c_{j}^{k+1}\}$ is obtained by modifying the set $\{a_{j}^{k+2}\}$ as follows: if $a_{j}^{k+2}$ is to the right of $\gamma_{k+1}$, replace $a_{j}^{k+2}$ by the set of arcs $a_{j}^{k+2}\cap (S\setminus \gamma_{k+1}^{1})$ (we are assuming the standard choice of isotopy class) to the right of $\gamma_{k+1}^{1}$, etc. The last vertex of $\gamma^1$ before $m_2$, call it $\gamma_{l}$, is constructed by surgering along all arcs $a_{j}^{l}$ that were not disjoint from $\gamma_{l-1}^{1}$.\\

In the previous paragraph, it can be assumed that $N_1$ cuts $S$ into subsurfaces, one of which is to the left of $N_1$ and one of which is to the right of $N_1$. If $N_1$ were a set of nested, null homologous multicurves for which this is not true, discarding $N_1$ would not define an edge of $\mathcal{C}(S, \alpha)$.\\

The geodesic segment $\gamma^{2}$ is obtained similarly from $\gamma^{1}$, by not discarding the null homologous multicurve $N_2$, etc. \\

\textbf{Restrictions of Jacobi fields.} The \textit{restriction} of a Jacobi field $J(a, \gamma)$ can be defined. This is done by constructing a one parameter family $\gamma, \gamma_{r}^{1}, \gamma_{r}^{2}, \ldots$ contained within the one parameter family $\gamma, \gamma^{1}, \gamma^{2}, \ldots$ to which $J(a, \gamma)$ is tangent. The vertices of the geodesic segments $\gamma_{r}^{i}$ are all vertices on the geodesic segments $\gamma, \gamma^{1}, \gamma^{2}, \ldots$. Taking a restriction of a Jacobi field is the same thing as multiplying by a scalar field $\phi\in \mathbb{Q}$, $0\leq \phi \leq 1$ such that $\phi J(a,\gamma)$ determines a valid one parameter family. \\

Restrictions of Jacobi fields can interpolate between geodesic segments, at least one of which is constructed in a seemingly random way. Consider for example a path $\delta$ with $m_1$ and $m_2$ as in Example \ref{alternative}, where $\delta$ is constructed by surgering along arcs to the left or to the right in a random way. A restriction of the Jacobi field tangent to the one parameter family described in the example interpolates between $\gamma$ and $\delta$.\\

%Denote by $I_c$ the maximal subpath of $m_{1}, \gamma_{1}, \gamma_{2},\ldots, m_{2}$ on which the set of arcs $\{c_{i}\}$ determine alternative surgeries on the preceeding multicurve. Recall that when surgeries are independent, they commute, so for $\gamma_{i+1}$ on $I_c$ we can define $\gamma_{i+1}^{1}$ to be the multicurve constructed from $\gamma_{i}$ by surgering along the arcs $\{a_{i}\}\setminus \{b_{i}\}\cup\{c_{i}\}$. Suppose $k$ is as small as possible such that $\gamma_{k+1}$ is obtained by surgering $\gamma_k$ along some arcs $\{d_{i}\}$ equivalent to some of the arcs $\{c_{i}\}$. For $\gamma_{i}$ between the second last vertex of $I_c$ and $\gamma_{k}$, $\gamma_{i+1}^{1}$ is constructed by surgering $\gamma_{i}^{1}$ along $\{a_{i}\}$. Then until the next vertex at which $\gamma_{l+1}$ is obtained by surgering $\gamma_l$ along some arcs $\{e_{i}\}$ equivalent to some of the arcs $\{c_{i}\}$, $\gamma_{i+1}^{1}$ is constructed by surgering $\gamma_{i}^{1}$ along $\{a_{i}\}\setminus \{d_{i}\}$, etc. A one parameter family of geodesic paths is obtained inductively as for Jacobi fields coming from optional surgeries. The length, direction and support of $J(\{c_{i}\},\{b_{i}\},\gamma)$ is also defined in the same way. \\

\begin{rem}The definitions of one parameter families are symmetric in $m_1$ and $m_2$, but the directions of the Jacobi fields reverse when $m_1$ and $m_2$ are interchanged. To understand why this is so, note that surgering along a horizontal arc has an inverse. When $m_1$ and $m_2$ are interchanged, this has the effect of exchanging a surgery with its inverse. It follows that the same definition of one parameter family corresponding to the optional surgery $s_a$, when applied to $m_{2}, \gamma_{j}^{n}, \gamma_{j-1}^{n},\ldots,m_{1}$ in place of $m_{2}, \gamma_{j}, \gamma_{j-1}, \ldots, m_{1}$, and $s_{a}^{-1}$ in place of $s_a$, gives the same family. Exactly the same is true for Jacobi fields arising in other ways. The Figures \ref{subgraph} and \ref{subgraph2} were drawn in such a way as to highlight this symmetry.
\end{rem}

\subsection{Linear Combinations of Jacobi Fields} \label{linearcombinations}We would like to be able to describe all geodesics connecting $m_1$ and $m_2$ by taking linear combinations of Jacobi fields. However, it is necessary to make sure that the linear combination determines a valid set of deformations within one parameter families. There are constraints to check, and it is necessary to make sense of what it means to add Jacobi fields representing surgeries that are not independent. \\

The constraints are that edges can only connect disjoint multicurves, and an edge can only connect two multicurves whose difference is an embedded, consistently oriented subsurface of $S$.\\

%Two Jacobi fields $J(a,\gamma)$ and $J(b,\gamma)$ will be said to be \textit{independent} if $a$ and $b$ represent independent surgeries or the supports of $J(a,\gamma)$ and $J(b,\gamma)$ are disjoint.\\                                                                                                  

To add two Jacobi fields with the same direction, whenever this gives another valid Jacobi field, we simply add the magnitudes and leave the direction unchanged.\\

The sum of two Jacobi fields $J(a,\gamma)$ and $J(b,\delta)$, where defined, should be thought of as a recipe for moving within two one parameter families. First, $J(a,\gamma)$ determines a deformation of the geodesic $\gamma$ within a one parameter family to obtain a geodesic $\gamma^{k}$. When $\gamma^{k}=\delta$, the second Jacobi field gives a recipe for a further deformation within a one parameter family of $\gamma^{k}$. \\

Subtraction of a Jacobi field $J$ is defined as the inverse of addition, i.e. a deformation within a one parameter family in the direction opposite to that determined by $J$.\\

Linear combinations of Jacobi fields do not necessarily represent Jacobi fields, because there may not be one parameter families to which the linear combination is tangent. It is necessary to consider noncommutative linear combinations in order to describe the entire space of geodesic segments connecting two vertices. When two Jacobi fields along $\gamma$ commute, for example, they have disjoint support, represent independent surgeries or are parallel, by abuse of notation their linear combination will be called a \textit{linear combination of two Jacobi fields along} $\gamma$.\\

As an example of a linear combination, let $J(n_{1},\gamma)$ and $J(n_{2},\gamma)$ be Jacobi fields that arise from discarding non peripheral null homologous multicurves in the isotopy classes $n_1$ and $n_2$, respectively. Suppose also $\gamma_{i}$ is in the intersection of the support of $J(n_{1},\gamma)$ and $J(n_{2},\gamma)$, and the interior of a multicurve in the isotopy class $n_{1}-n_{2}$ is disjoint from the interior of $\gamma_{i+1}-\gamma_i$. Then $J(n_{1},\gamma)$ is a linear combination of $J(n_{1}-n_{2},\gamma)$ and $J(n_{2},\gamma)$.\\

\begin{defn}[The dimension of the space of Jacobi fields along a geodesic segment]
The dimension of the space of Jacobi fields along a geodesic segment $\gamma$ is the smallest possible number of Jacobi fields along $\gamma$ in a set $\mathcal{J}$, such that any Jacobi field along $\gamma$ can be written as a linear combination of elements of $\mathcal{J}$.
\end{defn}

\section{Proof of Theorem \ref{maintheorem}}\label{proof}
To start off with, it will be shown that the Jacobi fields determine the entire space of geodesic segments in some sense. After this, the dimension of the space of geodesics will be defined, and Theorem \ref{maintheorem} proven. \\

\begin{defn}[The subspace of geodesic segments spanned by a set $\mathcal{J}$ of Jacobi fields]

Given two geodesic segments connecting $m_1$ and $m_2$, call them $\delta$ and $\gamma$, $\delta$ will be said to be in the span of a set of Jacobi fields $\mathcal{J}$ if it is possible to find a linear combination of Jacobi fields in $\mathcal{J}$, as defined in Subsection \ref{linearcombinations}, that determines a deformation of $\gamma$ into $\delta$ through one parameter families. 

\end{defn}
  
\begin{thm}\label{switcheroo}
Linear combinations of Jacobi fields and their restrictions span the space of geodesic segments connecting $m_1$ to $m_2$.
\end{thm}
\begin{proof}
Let $\gamma$ be the unique middle path in the family of geodesic segments connecting $m_1$ to $m_2$, and let $\delta$ be the geodesic segment $m_{1},\delta_{1}, \delta_{2},\ldots, m_{2}$. This theorem is proven by showing that there is a linear combination of Jacobi fields that determines $\gamma-\delta$. \\

It is clear that if $\delta$ is constructed by repeatedly surgering along arcs on the boundary of $S_{min}$ and discarding the null homologous multicurves $\partial S_{min}$, there is a linear combination of Jacobi fields coming from alternative surgeries that represent the difference of the two geodesic segments. Similarly, whenever for each $i$, $\delta_{i+1}$ could be constructed by surgering along a set of arcs whose endpoints are all assigned the same value of the overlap function, as in Example \ref{replace}; a surgery of this type is a surgery along the arcs on the boundary of $S_{imin}$ or $S_{imax}$ for some $i$. Also, the statement of the theorem is clear when it is possible to reduce to one of these previous cases by subtracting Jacobi fields representing optional surgeries or by adding/subtracting Jacobi fields that represent discarding null homologous submulticurves.\\

If an optional surgery $s_a$ on $\delta_{l}$ does not define a one parameter family, for the following special case it will be explained how to find a linear combination of Jacobi fields that take $\delta$ to a geodesic for which $s_a$ \textit{does} determine a one parameter family. Suppose, for some $l<k$, $\{a_{j}^{k}\}$ can be chosen such that 
\begin{itemize}
\item for each $j$ the endpoints of the arcs have the same value $f$ of the overlap function of $m_{2}-m_{1}$, and
\item $f$ is the value of the overlap function on the endpoints of $a$.
\end{itemize}
This special case occurs, for example, when all surgeries except $s_a$ are along arcs on the boundary of $S_{imax}$ or $S_{imin}$. There is a Jacobi field $J$ coming from an alternative surgery that replaces surgeries along the arcs $\{a_{j}^{k}\}$ with surgeries on $\delta_{m}$, $k\leq m$, along arcs $\{c_{j}^{m+1}\}$ with the same end points as $\{a_{j}^{k}\}$, but on the other side of $\delta_m$. Deforming in the direction of $J$, a geodesic segment is obtained along which $s_a$ determines a one parameter family. Subtract the corresponding Jacobi field to obtain a geodesic segment with one fewer optional surgeries than $\delta$.\\

Now if the previous special case does not occur, and $s_a$ is the only optional surgery along $\delta$, the remainder of this proof, applied to the geodesic segment connecting $\delta_{l+1}$ to $m_2$, shows how to reduce to the special case from the previous paragraph. If there is more than one optional surgery, let $s_a$ be the optional surgery performed on $\delta_{l}$, where $\delta_{l}$ is the last multicurve representing a vertex of $\delta$ along which optional surgeries are performed. Whenever two or more optional surgeries are performed on $\delta_l$, the corresponding arcs are necessarily on the same side of $\delta_l$, so this is not a problem. Next the second last optional surgery is removed, etc.\\

Warning - in the previous paragraph, what we may not do is restrict to subsegments connecting, for example, $\delta_{i}$ and $\delta_j$. The labels ``optional surgery'', ``alternative surgery'', etc. are not preserved when restricting to subsegments, because these labels refer to properties of the overlap function with $m_2$.\\

\begin{figure}
\begin{center}
\def\svgwidth{12cm}
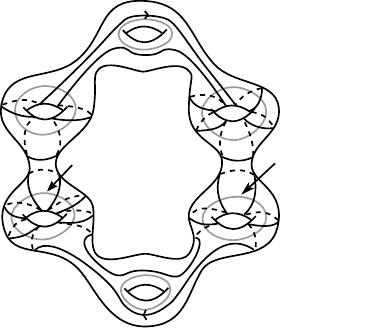
\caption{The multicurve $m_2$ is shown in grey.}
\label{quer}
\end{center}
\end{figure}

The main difficulty in proving this theorem comes from examples such as that in Figure \ref{quer}. Suppose $\delta_1$ is constructed by surgering $m_1$ along the arcs $\{a_{j}^{1}\}$ to the right of $m_1$. These arcs con not be chosen such that their endpoints have the same value of the overlap function, and none of the associated surgeries are optional. Call such sets of surgeries \textit{diagonal}. Deforming geodesic segments with diagonal surgeries into a middle path is difficult because these surgeries can not be replaced by surgeries along $S_{imax}$ or $S_{imin}$ for any $i$, nor by subtracting Jacobi fields coming from optional surgeries. \\

The endpoints of the arcs $\{a_{j}^{1}\}$ necessarily separate $S_{max}$ from $S_{min}$ on both multicurves $m_1$ and $m_2$. Otherwise surgering $m_1$ along the set $\{a_{j}^{1}\}$ could not give a vertex $\delta_1$ with $\delta(\delta_{1},m_{2})<\delta(m_{1}, m_{2})$. It follows that $\delta_1$ is a union of two disjoint multicurves; $\delta_{1+}$, which has arcs on the boundary of $S_{max}$, and $\delta_{1-}$, which has arcs on the boundary of $S_{min}$. Further diagonal surgeries along arcs with endpoints on $\delta_{1+}$ or $\delta_{1-}$ (but not both) to obtain the multicurve $\delta_2$ will also clearly preserve the decomposition, etc.\\

%Suppose for the moment that the only options areThe multicurve $\delta_2$ might be constructed by surgering $\delta_{1+}$ along arcs to its left, ot surgering                                                       

It is possible to keep performing diagonal surgeries to construct consecutive vertices along $\delta$ until a value of $i$, call it $i^{*}$, is reached such that no diagonal surgery on $\delta_{i^{*}}$ can be used to construct the next vertex along $\delta$. This happens when the maximum of the overlap function has been brought so low that $\delta_{i^{*}-}$ has an arc on the boundary of $S_{i^{*}max}$ and the minimum of the overlap function  brought so high that $\delta_{i^{*}+}$ has an arc on the boundary of $S_{i^{*}min}$.\\

Since the surgeries on different multicurves commute, by moving $\delta$ through one parameter families to the geodesic segment $\eta$, it is possible to assume without loss of generality that the surgeries on the ``$-$'' multicurves were performed before those on the ``$+$'' multicurves. Similarly, if diagonal surgeries on $\delta$ are interspersed with other surgeries, $\eta$ is chosen such that the diagonal surgeries were all performed first. Now since it is not possible to perform any more diagonal surgeries, it can be assumed with out loss of generality that for $i^{*}\leq i$, $\eta_{i+1}$ is constructed by surgering $\eta_{i}$ along the arcs on the boundary of $S_{imin}$.\\

Let $\eta_{k}$, $k \leq i^{*}$ be the first vertex of $\eta$ at which we start surgering along the ``$+$'' multicurves. By construction, there is an arcs of $m_{2}\cap(S\setminus \eta_{k+})$ on the boundary of $S_{kmin}$. The arcs of  $m_{2}\cap(S\setminus \delta_{k-})$ on the boundary of $S_{kmin}$ represent optional surgeries and determine a one parameter family over $\eta$. Move $\eta$ into this one parameter family to obtain $\eta^{1}$. On $\eta^{1}$, the surgeries along $\eta_{k+}$ along arcs not on the boundary of $S_{kmin}$ become optional surgeries that determine a one parameter family. Remove these optional surgeries by moving through the corresponding one parameter families to get a geodesic segment $\mu$, where $\mu_{k+1}$ was constructed from $\mu_{k}$ by surgering along arcs on the boundary of $S_{kmin}$. Similarly for $\mu_{k+2}$ up to $\mu_{i^{*}}$.\\

Now starting with $\mu_{k}$, replace the surgeries along arcs of $m_{2}\cap(S\setminus \mu_{k})$ on the boundary of $S_{kmin}$ with the surgeries along the arcs of $m_{2}\cap(S\setminus \mu_{k})$ on the boundary of $S_{kmax}$. Do the same with $\mu_{k+1}, \mu_{k+2}$ etc. until $\mu_{j}$ is reached, where $\mu_{j-}$ has an arc on the boundary of $S_{jmax}$. This is done by moving through one parameter families to get to the geodesic segment $\mu^{1}$. It can be assumed without loss of generality that a $\mu_{j}$ is reached before $m_2$, because otherwise this same argument, only with $+$ and $-$, min and max, and left and right interchanged, would apply. Along $\mu^{1}$, the surgeries used to construct the first $k$ multicurves commute with the surgeries used to construct the next $j-k$ multicurves, so again, moving through one parameter families, it is possible to exchange the order, to obtain a geodesic segment $\nu$. Along $\nu$, the same argument given before shows that it is possible to get rid of the diagonal surgeries by moving through one parameter families.\\

We have now covered all the different types of surgeries or ways of discarding null homologous multicurves that might be used to construct a geodesic path, and shown that there exist linear combinations of Jacobi fields that take vertices on all these geodesics to corresponding vertices on the middle path.
\end{proof}

\begin{defn}[Dimension of the space of geodesic segments]\label{dimension}
The dimension of the space of geodesic segments in $\mathcal{C}(S,\alpha)$ connecting the vertices $m_1$ to $m_2$ is the largest possible dimension of the space of Jacobi fields along a geodesic segment.
\end{defn}

\begin{defn}[Critical Level]
The index $i$ is a critical level if $\gamma_{i}$ is the first or last vertex in the support of a Jacobi field $J(a,\gamma)$, where $J(a,\gamma)$ is not the restriction of another Jacobi field.
\end{defn}

%\textbf{Remark}. Critical levels are closely related to the supports of Jacobi fields as follows: From the way an element %$\gamma^{k+1}$ of a one parameter family was constructed from $\gamma^k$, it follows that the last vertex in the %support of a Jacobi field is necessarily a critical level. By symmetry, this is also true for the first vertex in the support of a %Jacobi field.\\

The index $i$ could be a critical level if, for example, the vertex after $\gamma_{i-1}$ could not have been constructed by surgering along a set of arcs of the form $\{v_{j}\circ b_{j}^{i-1}\circ w_{j}\}$, where $b_{j}^{i-1}$ is homotopic to $a_{j}^{i-1}$, or when the number of arcs in the homotopy class with representative $v_{j}\circ a_{j}^{i-1}\circ w_{j}$ is not the same as the number of arcs in the homotopy class with representative $a_{j}^{i-1}$ for some $j$.\\
%\end{defn}

%\textbf{Remark}. Critical levels are closely related to the supports of Jacobi fields as follows: From the way an element %$\gamma^{k+1}$ of a one parameter family was constructed from $\gamma^k$, it follows that the last vertex in the %support of a Jacobi field is necessarily a critical level. By symmetry, this is also true for the first vertex in the support of a %Jacobi field.\\

\textbf{Remark}. There are two possible ways in which the dimension of the space of geodesic segments could have been defined. Firstly, in terms of the maximum possible number of Jacobi fields along a geodesic segment as in Definition \ref{dimension}, and secondly, in terms of the maximum number of Jacobi fields needed in a linear combination representing the difference of two geodesic segments. Analysing the proof of Theorem \ref{switcheroo} carefully shows that, assuming Theorem \ref{maintheorem}, both are finite. This is because it is possible to move $\delta$ through a finite number of one parameter families to a geodesic segment $\omega$ for which the following is true: For all $i$, $\omega_{i+1}$ is constructed from $\omega_{i}$ by the obvious continuation of the construction of $\omega_{i}$ from $\omega_{i-1}$ unless $\omega_{i}$ is a critical level. Then the deformations that take a vertex $\omega_{i+1}$ to its target vertex $\gamma_{i+1}$ are the obvious continuations (i.e. deformations within the same one parameter family) of the deformations needed to take $\omega_{i}$ to its target vertex $\gamma_{i}$, unless a critical level is reached.\\

It follows from the remark that when geodesic segments with the same endpoints do not stay close, there will necessarily be some Jacobi field with large magnitude.\\

We now begin the proof of Theorem \ref{maintheorem}.\\

\begin{proof}
It is well known that the number of homotopy classes of arcs of $m_{2}\cap (S\setminus m_{1})$ is bounded. For example, in \cite{Me2}, Lemma 11, the sharp bound $-3\chi(S)$ was obtained. To see how the number of homotopy classes of horizontal arcs bounds the dimension of the space of Jacobi fields, first of all, surgeries along homotopic arcs are equivalent. If $\gamma_i$ is surgered along a set of horizontal arcs $\{a_{j}^{i+1}\}$ containing the arc $a$, the multicurve $\gamma_{i+1}$ is not obtained by also surgering along $v\circ a \circ w$ for arcs $v$, $w$ of $m_{2}\cap (S\setminus \gamma_{i})$ because
\begin{itemize}
\item if $a$ has both endpoints on a curve $c$ in $\gamma_{i}$ such that $\gamma$ has more than one curve homotopic to $c$, then $\gamma_{i+1} - \gamma_{i}$ could not be the boundary of an embedded, oriented subsurface of $S$.
\item if $a$ has both endpoints on the null homologous curve $N(\{a_{j}^{i+1}\})$ discarded after surgering along $\{a_{j}^{i+1}\}$, since $N(\{a_{j}^{i+1}\})$ is discarded anyway, it does not make any difference to the path if we surger it along $v\circ a \circ w$ or not.
\item in all other cases, surgering along $v\circ a \circ w$ would mean that $\gamma_{i+1}$ intersects $\gamma_{i}$.
\end{itemize}

%It follows from the bound on the number of homotopy classes of arcs that there are at most $-3\chi(S)$ Jacobi fields with support on any given vertex. However, we are trying to prove something a bit stronger than that, namely that the total number of Jacobi fields along a geodesic is bounded. \\

Local extrema of the overlap function can not ever be created as $i$ increases; surgering along a horizontal arc of $m_{2}\cap(S\setminus \gamma_{i})$ to the right of $\gamma_{i}$ decreases a local maximum along $m_2$, and surgering $\gamma_{i}$ along a horizontal arc to the left of $\gamma_{i}$ increases a local minimum along $m_2$. A saddle is a local extremum along $m_2$, so for the same reason, the number of saddles can not increase either. However, not all saddles or local extrema determine independent surgeries, because many of them might have homotopic arcs on their boundaries. In Figure \ref{octagon} is an example of how the number of \textit{homotopy classes} of horizontal arcs can increase. \\

\begin{figure}
\begin{center}
\def\svgwidth{12cm}
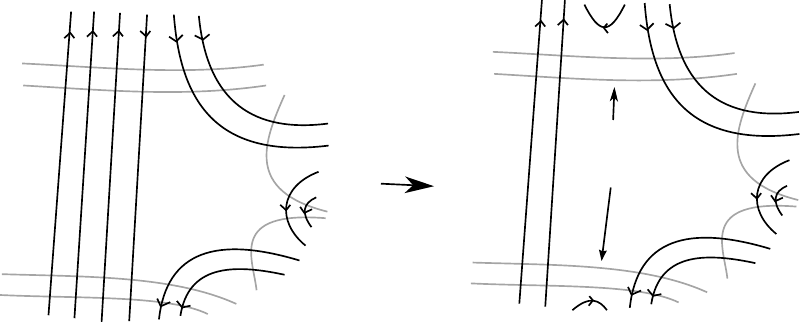
\caption{The multicurve $m_2$ is shown in grey, and $m_1$ in black. After surgering along a horizontal arc of $m_{2}\cap (S\setminus m_{1})$, the number of homotopy classes of horizontal arcs increases.}
\label{octagon}
\end{center}
\end{figure}

\textbf{Splitting and Killing homotopy classes}. For a given arc $a$ in the set $\{a_{j}^{1}\}$, suppose $v_{1}\circ a \circ w_{1}$ is an arc in the set $\{a_{j}^{2}\}$, and $v_{2}\circ v_{1}\circ a \circ w_{1}\circ w_{2}$ an arc in $\{a_{j}^{3}\}$, etc. For large enough $n$, one or both of the following two things will happen: there are two or more homotopy classes of arcs $v^{'}_{n}\circ\ldots\circ a \circ w_{1}\circ\ldots\circ w_{n}$ and $v^{"}_{n}\circ\ldots\circ a \circ w_{1}\circ\ldots\circ w_{n}$ or $v_{n}\circ\ldots\circ a \circ w_{1}\circ\ldots\circ w^{'}_{n}$ and $v_{n}\circ\ldots\circ a \circ w_{1}\circ\ldots\circ w_{n}^{"}$; this will be called \textit{splitting} the homotopy class $a$. The other possibility is that $v_{n}\circ\ldots\circ a \circ w_{1}\circ\ldots\circ w_{n}$ is a vertical arc, but $v_{n-1}\circ\ldots\circ a \circ w_{1}\circ\ldots\circ w_{n-1}$ was not. This will be called \textit{killing} the homotopy class $v_{n-1}\circ\ldots\circ a \circ w_{1}\circ\ldots\circ w_{n-1}$. A homotopy class is killed when one, but not both, of $v_n$ or $w_n$ is a horizontal arc.\\

Given that the number of homotopy classes of arcs is bounded from above by $-3\chi(S)$, the number of critical levels that arise from splitting a given homotopy class is clearly bounded by $-3\chi(S)$. So $a$ is split into fewer than $-3\chi(S)$ homotopy classes, many of which will eventually be killed. Once a homotopy class of horizontal arcs has been killed, the resulting homotopy classes of vertical arcs can become $v_{i}$s and $w_{i}$s for another horizontal arc, and the hexagons, octagons etc. that split $a$ into homotopy classes, can cause another homotopy class of arcs to be split.\\

The geometrical significance of killing off a homotopy class of arcs is that all the local maxima or minima of the overlap function on $m_2$ corresponding to that homotopy class have been levelled off. Let $k$ be the number of homotopy classes a homotopy class of horizontal arcs of $m_{2}\cap(S\setminus m_{1})$ with representative $a$ is eventually split up into before the arcs are all killed off. Since surgering the multicurve $\gamma_{i}$ along horizontal arcs of $m_2$ can not create local extrema of the overlap function along $m_2$, and up to homotopy, there were no more than $-3\chi(S)$ arcs of $m_{2}\cap (S\setminus m_{1})$ representing local extrema on $m_2$, it follows that the $k$ splits do not need to be counted more than $-3\chi(S)$ times.\\

This gives a bound of $9\chi(S)^{2}$ for the number of Jacobi fields along $\gamma$ coming from optional surgeries. Some of these optional surgeries, when grouped together, might determine Jacobi fields coming from alternative surgeries. Also, a given homotopy class of horizontal arcs might determine a surgery that is performed as a component of more than one alternative surgery.\\

There can be no more than $-\chi(S)-1$ Jacobi fields from alternative surgeries with support on $\gamma_1$. This comes from the observation used in the proof of Theorem \ref{switcheroo}, that an alternative surgery determines a null homologous multicurve ($\partial S_{+}$) that partitions $\gamma_1$ into two multicurves. For surgeries along arcs on the boundary of $S_{max}$ or $S_{min}$, $\partial S_{+}$ could be contractible, giving a trivial partition. Jacobi fields coming from alternative surgeries could arise from a splitting or killing of a homotopy class, or when a local extremum makes it necessary to change the number of arcs to be surgered along. This gives an upper bound of $18\chi(S)^{2}$ Jacobi fields coming from alternative surgeries. An upper bound on the number of isotopy classes of null homologous submulticurves giving linearly independent Jacobi fields is half the number of Jacobi fields coming from alternative surgeries. In total, this gives a bound of $36\chi(S)^{2}$.\\

% Let $R(v\circ a \circ w)$ be the rectangle corresponding to the arc $(v_{1}\circ a \circ w_{1})$, whenever this is horizontal, and has as many elements in it as the homotopy class $a$. Similarly for $R(v_{1}\circ a \circ w_{1})$

%Given that the number of homotopy classes of arcs is bounded from above by $-3\chi(S)$, the number of critical points that arise from a given homotopy class of horizontal arcs being split into two or more distinct, independent homotopy classes, as in Figure \ref{octagon} is also clearly bounded by $-3\chi(S)$. This gives an upper bound of $18\chi(S)^{2}$ on the number of critical levels along any geodesic segment, and $9\chi(S)^{2}$ Jacobi fields.\\
\end{proof}

\textbf{Remark}. The bound in the previous proof is clearly not sharp. However, to get a considerably better bound, it would seem that a much more detailed argument would be needed; the details of which are more tedious than illuminating.\\

\section{Sublevel Projections}\label{sublevel}
%It this last example, the critical level can be seen to partition the surface into two subsurfaces such that the projections to these two subsurfaces behave almost independently of each other.

Subsurface projections were defined in \cite{MasurandMinskyII} in order to be able to break the curve complex down into simpler pieces, thought of as curve or arc complexes of subsurfaces. The nested structure arising from the subsurface projections were used to describe families of quasigeodesics called hierarchy paths, and to show how these families of quasigeodesics are controlled by the subsurface projections of their endpoints.\\

In this section, the notion of sublevel projections are defined, so-named because there are some very strong parallels with subsurface projections. Informally, critical levels are used to partion a geodesic into subintervals that are as rigid as possible and behave almost independently of each other.\\

%In section \ref{} an algorithm for constructing middle paths was briefly outlined. Although, as discussed, these paths are not necessary paths in $\mathcal{C}(S,\alpha)$, the symmetry properties make these paths useful for defining subsurface projections. As discussed previously, in order to be able to meaningfully compare subsurfaces on which the overlap function takes on certain values, given the endpoints $\gamma_{1}$ and $\gamma_{n}$ of a path, it is necessary to fix a canonical choice of representatives of the homotopy classes of the multicurves representing the path joining the endpoints. This is done as follows: In \cite{Me2}, lemma 8, it was shown that a middle path $\gamma_{1},\ldots,\gamma_{n}$ has the property that for each $1<i<n$, $\gamma_i$ is an oriented, embedded subcomplex of the one dimensional cell complex $\gamma_{1}-\gamma_{n}$, and the subsurface bounded by $\gamma_{i+1}-\gamma_{i}$ is $S_{n-i\leq f}\cup_j Int(c_{j})$, where $S_{n-i\leq f}$ is the subsurface of $S$ on which the overlap function, $f(\gamma_{1},\gamma_{n})$ is at least $n-i$, and $\left\{c_{j}\right\}$ are the contractible curves discarded from $\gamma_{i+1}^{'}$ after the surgery.\\

Let $m_{1}, \gamma_{1}, \ldots, m_{2}$ be the middle path connecting $m_1$ and $m_2$. Given two integers $l_{1} <l_2$ in the range of the overlap function of $m_{2}-m_1$, the \textit{sublevel projection} of $m_{1}$ and $m_{2}$ between the levels $l_1$ and $l_2$, $\Pi_{l_{1}}^{l_{2}}(m_{1},m_{2})$, is the pair of homologous multicurves $(\gamma_{l_{1+1}}, \gamma_{l_{2}})$.\\

%Let $S_{l_{1}\leq f \leq l_{2}}$ be the subspace of $S$ on which the overlap function $f$ of $\gamma_1$ and $\gamma_n$ satisfies $l_{1}\leq f \leq l_{2}$. 
The sublevel projection of $m_1$ and $m_2$ between the levels $l_1$ and $l_2$ is similar to a subsurface projection to $S_{l_{1+1}\leq f \leq l_{2}}$, in the sense that $\gamma_{l_{1}}$ and $\gamma_{l_{2}}$ represent vertices as close as possible to $m_1$ and $m_2$, respectively, given that they only intersect within the subsurface $S_{l_{1+1}\leq f \leq l_{2}}$. It follows from Theorem 9 in \cite{Me2} that this definition is symmetric in $m_1$ and $m_2$.\\% $\gamma_{l_{1+1}}$ is constructed so as not to intersect $\gamma_n$ in the subsurface $S_{f\leq l_{1+1}}$, and $\gamma_{l_2}$ is constructed so as not to intersect $\gamma_1$ in the subsurface $S_{l_{2}\leq f}$.\\

\textbf{Distance Formula}. Consider the finite number of sublevel projections of the form $\Pi_{i}:=\Pi_{l_{i}}^{l_{i+1}}(m_{1},m_{2})$, where $l_{i}$ and $l_{i+1}$ are critical levels. Any collection of surgeries performed on the multicurve $\gamma_i$ to construct a multicurve $\gamma_{i+1}$ with $d(\gamma_{i+1},\gamma_{n})=d(\gamma_{i},\gamma_{n})-1$ necessarily decreases the distance between $\gamma_i$ and $\gamma_n$ in one of the sublevel projections $\Pi_i$. A distance formula analogous to the distance formula from \cite{MasurandMinskyII}, with a uniform bound on the number of sublevel projections follows immediately from the construction and Corollary \ref{j}. In this way, families of tight paths in $\mathcal{C}(S,\alpha)$ are even more rigidly controlled by the sublevel projections of their endpoints than is the case in the marking graph for hierarchy paths under subsurface projections, \cite{MasurandMinskyII}.

\bibliographystyle{plain}

\bibliography{bibgeospace}
\end{document}